\documentclass[12pt]{amsart}

\usepackage{amssymb,latexsym}

\usepackage{color}

\usepackage{enumerate}

\usepackage[T1]{fontenc}

 \usepackage[french,english]{babel}

\makeatletter

\@namedef{subjclassname@2010}{

  \textup{2010} Mathematics Subject Classification}

\makeatother
\newtheorem{thm}{Theorem}[section]

\newtheorem{lem}[thm]{Lemma}

\theoremstyle{definition}

\numberwithin{equation}{section}

\newcommand{\hk}{\mathcal{H}_k}
\newcommand{\A}{\mathcal{A}}

\newcommand{\im}{\textup{Im}}

\newcommand{\ep}{\varepsilon}

\renewcommand{\le}{\leqslant}
\renewcommand{\leq}{\leqslant}
\renewcommand{\ge}{\geqslant}
\renewcommand{\geq}{\geqslant}

\newcommand{\newabstract}[1]{%
  \par\bigskip
  \csname otherlanguage*\endcsname{#1}%
  \csname captions#1\endcsname
  \item[\hskip\labelsep\scshape\abstractname.]
}

\begin{document}

\baselineskip=17pt

\title[]{An omega result for the least negative Hecke eigenvalue}

\author{Youness Lamzouri}

\address{
Universit\'e de Lorraine, CNRS, IECL,
F-54000 Nancy, France}

\email{youness.lamzouri@univ-lorraine.fr}


\begin{abstract} 
We establish the existence of many holomorphic Hecke eigenforms $f$ of large
weight $k$ for the full modular group, for which the least positive integer $n_f$ such that $\lambda_f(n_f)<0$ satisfies $n_f \ge (\log k)^{1-o(1)}.$ This is believed to be best possible up to the $o(1)$ term in the exponent, and improves on a result of Kowalski, Lau, Soundararajan and Wu,
who showed that, when restricted to primes, the least prime $p$ such that
$\lambda_f(p)<0$ can be as large as $(\log k)^{1/2+o(1)}$. We also discuss an extension of our result to primitive holomorphic cusp forms of weight
$k$ and squarefree level $N\geq 1$.

\end{abstract}

\subjclass[2020]{Primary 11F30}

\maketitle


\section{Introduction}

Let $k$ be a large even integer, and denote by $\hk$ the set of holomorphic Hecke cusp forms of weight $k$ for the full modular group $\textup{SL}(2,\mathbb{Z})$. Then, $\hk$ is an orthonormal basis for the space of holomorphic cusp forms of weight $k$ for $\textup{SL}(2,\mathbb{Z})$ and we have 
\begin{equation}\label{Eq:CardinalityHK}
    |\hk|=\frac{k}{12}+O\left(k^{2/3}\right).
\end{equation}
Given $f\in \hk$, its Fourier expansion can be written as
$$ f(z)=\sum_{n=1}^{\infty}\lambda_f(n) n^{(k-1)/2}e(nz), \ \  \text{ for } \ \ \im(z)>0,$$
where $e(z):=e^{2\pi iz}.$
The $\lambda_f(n)$ are the normalized
eigenvalues of the Hecke operators, and satisfy the Hecke relations
\begin{equation}\label{Eq:Hecke}
 \lambda_f(m)\lambda_f(n)=\sum_{d|(m,n)}\lambda_f\left(\frac{mn}{d^2}\right),
\end{equation}
for all $m, n\geq 1$. In particular, $\lambda_f$ is a real-valued multiplicative function of $n$ and  satisfies Deligne's bound
\begin{equation}\label{Deligne}
\left|\lambda_f(n)\right|\leq d(n),
\end{equation}
where $d(n)$ is the divisor function. Hence, for every prime $p$, there exists a unique angle $\theta_f(p)\in [0, \pi]$ such that
\begin{equation}\label{Eq:Angles}
   \lambda_f(p)=2\cos(\theta_f(p)). 
\end{equation}


In \cite{KLSW}, Kowalski, Lau, Soundararajan and Wu studied the signs of the sequence $\lambda_f(n)$.
Their results show a strong analogy between  the first negative Hecke eigenvalue and the problem of the least quadratic non-residue, which has a rich history in analytic number theory. 
Let $n_f$ be the smallest positive integer $n$ such that $\lambda_f(n)<0$. The best known bound for $n_f$ is due to Matom\"aki \cite{Ma12}, who improved the results of \cite{KLSW} by showing that 
$$n_f\ll k^{3/4}.$$
 This is probably far from the truth, since it is known that $n_f\ll (\log k)^2$ under the assumption of the generalized Riemann hypothesis (GRH). 
In analogy with the least quadratic non-residue, a folklore conjecture asserts that the correct order of magnitude for the maximum of $n_f$ over $f\in \hk$ should be $(\log k)^{1+o(1)}$.
Moreover, a simple probabilistic heuristic argument, in which
the values $\{\lambda_f(p)\}_{p\ \mathrm{prime}}$ are modeled by independent
Sato--Tate distributed random variables, suggests the more precise prediction that the maximal order of
$n_f$, as $f$ varies over $\mathcal H_k$, is likely to be
$\asymp  \log k \log\log k.$

In the case of the least quadratic non-residue, it is known that the analogue of the $(\log k)^{1+o(1)}$ lower bound is attained infinitely often. Indeed, using an easy argument, based on the law of quadratic reciprocity and Linnik's theorem on the least prime in an arithmetic progression, Fridlender \cite{Fr49} and Sali\'e \cite{Sa49} independently showed the existence of infinitely many primes $p$ such that the least quadratic non-residue modulo $p$ is $\gg \log p$. This lower bound was sharpened to $\gg\log p \log_3 p$ by Graham and Ringrose \cite{GrRi90}, where here and throughout $\log_j$ denotes the $j$-th iterate of the natural logarithm for $j\ge 1$. Assuming GRH, Montgomery \cite{Mo71} improved this bound to $\gg \log p \log\log p$ and conjectured that this is best possible. 

In the case of the first negative Hecke eigenvalue, the only known omega result  is due to Kowalski, Lau, Soundararajan and Wu \cite{KLSW}, and states that for $\gg |\hk|^{1-\varepsilon}$ cusp forms $f\in \hk$, the least prime $p_f$ such that $\lambda_f(p_f)<0$ satisfies $p_f\gg \sqrt{\log k\log\log k}.$ One should note that this result does not give information on $n_f$, since the latter might be strictly smaller than $p_f$, due to the fact that $\lambda_f$ is not completely multiplicative\footnote{However, by the multiplicativity of $\lambda_f$, we know that $n_f$ is a power of a prime.}.

In this paper, we fill this gap, showing the existence of $\gg |\hk|^{1-\varepsilon}$ cusp forms $f\in \hk$ such that $n_f\gg \log k/\log\log k.$  
\begin{thm}\label{Thm:Main}
Let $k$ be a large even integer. There are $\displaystyle{\gg|\hk|\exp\bigg(-3\frac{\log k\log_3 k}{(\log\log k)^2}\bigg)}$ cusp forms $f\in \hk$ such that $$n_f\gg \frac{\log k}{\log\log k}.$$
\end{thm}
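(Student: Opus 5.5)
We need many $f$ with $\lambda_f(p^j)\ge 0$ for all prime powers $p^j\le x$, where $x = c\log k/\log\log k$. Since $n_f$ is a prime power, it suffices to control $\lambda_f(p^j)$ for $p\le x$ and $j\le \log x/\log p$.

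\emph{Target angles.} A clean sufficient condition is that every angle $\theta_f(p)$ with $p\le x$ is close to $0$. Indeed $\lambda_f(p^j)=\sin((j+1)\theta)/\sin\theta$, and this is positive once $(j+1)\theta_f(p)<\pi$. Since $j+1\le 2\log x/\log p$, it is enough to require
\[
\theta_f(p)\le \delta_p:=\frac{\log p}{4\log x}\quad\text{for all } p\le x .
\]
Under the Sato--Tate (Plancherel) measure $\mu_\infty$, the event $\theta\in[0,\delta]$ has probability $\asymp\delta^3$. Heuristically, the proportion of forms satisfying all these conditions is then
\[
\prod_{p\le x}c\,\delta_p^3=\exp\Big(-O\big(\pi(x)\log\log x\big)\Big)=\exp\Big(-O\Big(\frac{\log k\,\log_3 k}{(\log\log k)^2}\Big)\Big),
\]
which is exactly the shape of the claimed count. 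So the plan is to prove a lower bound of this size for the harmonic (or natural) proportion of such $f$.

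\emph{Implementation via Petersson.} Choose, for each $p\le x$, a nonnegative trigonometric polynomial $P_p(\theta)=\sum_{m\le M}a_p(m)X_m(\theta)$ in the Chebyshev basis $X_m(\theta)=\sin((m+1)\theta)/\sin\theta$. Build it as a Fejér/Beurling--Selberg type minorant of the indicator of $[0,\delta_p]$. It should satisfy $P_p\le \mathbf 1_{[0,\delta_p]}$ up to a negligible error, have $\int P_p\,d\mu_\infty\gg\delta_p^3$, and have degree $M\asymp 1/\delta_p\ll \log x$. (Alternatively, use a square $|\sum_m b_m X_m|^2$ concentrated near $0$, which is automatically nonnegative.)

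Then consider
\[
S=\sum_f\omega_f\prod_{p\le x}P_p(\theta_f(p)),
\qquad \omega_f=\frac{\Gamma(k-1)}{(4\pi)^{k-1}\|f\|^2}.
\]
Expanding the product gives $\prod_p X_{m_p}(\theta_f(p))=\lambda_f(n)$ with $n=\prod_p p^{m_p}$. The Petersson formula then shows $\sum_f\omega_f\lambda_f(n)=\delta_{n=1}+O(\text{Kloosterman terms})$. The Kloosterman/Bessel terms are exponentially small as long as $n\le k^2/100$ or so, since $J_{k-1}(y)$ is tiny for $y<k/10$. Here $n\le\prod_p p^{M}=e^{O(x\log x)}$, which is $\le k^{1/2}$ once $c$ is small. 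The coefficient sums are bounded by $\prod_p\sum_m|a_p(m)|\le e^{O(\pi(x)\log\log x)}$, which is $\le k^{o(1)}$. Hence
\[
S=\prod_p\int P_p\,d\mu_\infty+O(k^{-1}),
\]
and the main term is $\ge\exp(-C\pi(x)\log\log x)$.

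\emph{Minorant issue and passing to counts.} This is the main obstacle. The $P_p$ must be nonnegative for the product to be a legitimate minorant, and each must be supported (essentially) in $[0,\delta_p]$. Exact support is impossible for polynomials, so I would instead use nonnegative polynomials that are tiny off $[0,2\delta_p]$. Then $S$ minus the contribution of forms violating some condition is bounded by the sum over $p$ of the mass where the factor is $\le\epsilon$, times $\prod_{q\ne p}\max P_q$. This needs the sup/mass ratio under control, and that is the delicate bookkeeping which fixes the constant $3$ in the exponent. The fix is to choose $\epsilon$ small relative to $\prod_q\|P_q\|_\infty^{-1}$, which costs only a constant factor in the degree.

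Finally, convert the harmonic weight to a count. Using $\omega_f\ll k^{-1}\log k$ (from $L(1,\mathrm{sym}^2f)\gg 1/\log k$, Hoffstein--Lockhart/Goldfeld--Hoffstein--Lieman), and $\prod P_p\le \prod_p\|P_p\|_\infty=e^{O(\pi(x)\log\log x)}$, we get
\[
\#\{f:n_f>x\}\ge \frac{S}{\max_f\omega_f\prod_p\|P_p\|_\infty}\gg |\hk|\exp\Big(-3\frac{\log k\log_3k}{(\log\log k)^2}\Big)
\]
after optimizing the constant $c$ in $x$.
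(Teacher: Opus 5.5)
Your architecture matches the paper's: nonnegative trigonometric polynomials peaked at $0$, expanded in the $X_\ell$ so that products over $p$ become $\lambda_f(n)$, averaged with Petersson, then converted to a count via $\max_f\omega_f\ll(\log k)^{O(1)}/k$. The gap is in the step you flag as the main obstacle, and the fix you propose does not work.

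You bound the contribution of forms with a bad angle at $p$ by $\epsilon\prod_{q\ne p}\|P_q\|_\infty$, times $\sum_f\omega_f\approx 1$. You then need $\pi(x)\,\epsilon\prod_q\|P_q\|_\infty$ to be smaller than $S\approx\prod_q\int P_q\,d\mu_{ST}$. A nonnegative $P_q$ essentially supported on $[0,2\delta_q]$ has $\int P_q\,d\mu_{ST}\ll\|P_q\|_\infty\,\delta_q^3$. So this forces $\epsilon/\|P_p\|_\infty\le\prod_{q\le x}O(\delta_q^3)=\exp\big(-c\log k\log_3k/(\log\log k)^2\big)$: the leakage must be as small as the final answer itself. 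By the Remez inequality, a trigonometric polynomial of degree $M$ bounded by $\epsilon$ outside an arc of length $O(\delta)$ has sup norm at most $\epsilon e^{O(M\delta)}$. You would therefore need $M_p\delta_p\gg\log k\log_3k/(\log\log k)^2$, not a constant. The integers $n=\prod_p p^{m_p}$ in your expansion then have $\log n$ as large as $\sum_{p\le x}M_p\log p\gg x\log k\log_3k/(\log\log k)^2=(\log k)^{2-o(1)}$. That is far outside the range $n\le k^2/100$ where Petersson says anything. Separately, the degree $M\asymp 1/\delta_p$ you start from only gives decay by a constant factor off the target arc; to make the leakage small at all you need $M_p\delta_p\to\infty$.

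The missing idea is to keep the leakage inside the Petersson average rather than bounding it by sup norms. The paper normalizes $0\le g_p\le 1$ with $g_p\le\epsilon$ off the target arc, and uses
\[
G(f)=\prod_{p\le z}g_p(\theta_f(p))-\epsilon\sum_{q\le z}\prod_{\substack{p\le z\\ p\ne q}}g_p(\theta_f(p)).
\]
This satisfies $G(f)\le\mathbf 1_{\A}(f)$ pointwise: if $\theta_f(q)$ is bad, the $q$-th subtracted term already cancels the main product. The subtracted terms are again products of Chebyshev expansions, so Petersson evaluates them as $\epsilon\sum_q\prod_{p\ne q}a_{0,p}$. The relative loss is then only $\epsilon\sum_q a_{0,q}^{-1}\ll\epsilon\,\pi(z)(\log\log k)^6$, and $\epsilon=4(\log k)^{-\pi/2}$ suffices. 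This requires only $L_p\delta_p\asymp\log\log k$; the paper takes $\delta_p=\log p/(16\log\log k)$ and $L_p\approx 4(\log\log k)^2/\log p$. Then $\sum_{p\le z}L_p\log p\approx\log k$, which stays within the range of Lemma~\ref{lem:Petersson}. With this device, and degrees a factor $\log\log k$ larger than $1/\delta_p$, the rest of your plan goes through.
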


Our approach differs from that of Kowalski, Lau,
Soundararajan and Wu \cite{KLSW}, who used a construction of Barton,
Montgomery and Vaaler \cite{BMV} of multidimensional trigonometric
minorants and majorants for the characteristic function of a box in  $[0,\pi]^{\pi(z)}$, to study the joint distribution of the
angles $(\theta_f(p))_{p\le z}$. 
In contrast, our method is inspired by ideas of Gonek and Montgomery \cite{GoMo1, GoMo2},
who built on earlier work of Tur\'an \cite{Tu60} to obtain a sharp localized form of Kronecker’s theorem
concerning inhomogeneous Diophantine approximation.

We now discuss an extension of our result to the set
$\mathcal H_k^{*}(N)$ of primitive holomorphic cusp forms of weight $k$,
squarefree level $N\geq 1$, and trivial nebentypus. Analogously to
\eqref{Eq:CardinalityHK}, it is known that
\begin{equation}\label{Eq:CardinalityHKN}
\lvert \mathcal H_k^{*}(N) \rvert
\sim \frac{k-1}{12}\,\varphi(N),
\end{equation}
as $kN\to\infty$, where $\varphi$ is Euler’s totient function. In
this setting, Kowalski, Lau, Soundararajan and Wu \cite{KLSW} showed that
there exist $\gg \lvert \mathcal H_k^{*}(N) \rvert^{1-\varepsilon}$ forms
$f\in\mathcal H_k^{*}(N)$ for which the least prime $p$ such that
$p\nmid N$ and $\lambda_f(p)<0$ satisfies
$
p \gg \sqrt{(\log (kN))(\log\log (kN))}.
$
In analogy with Theorem \ref{Thm:Main}, we prove the following result.

\begin{thm}\label{Thm:Main2}
Let $k\ge 2$ be an even integer and let $N\ge 1$ be squarefree, with
$kN$ sufficiently large. There are
$
\gg \lvert \mathcal H_k^{*}(N) \rvert
\exp\big(-3 \log(kN)\log_3(kN)/(\log_2(kN))^2\big)
$
forms $f\in\mathcal H_k^{*}(N)$ for which the least positive integer $n$
such that $(n,N)=1$ and $\lambda_f(n)<0$ satisfies
$$
n \gg \frac{ \log (kN)}{\log \log (kN)}.
$$
\end{thm}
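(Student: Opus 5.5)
The plan is to mirror the proof of Theorem \ref{Thm:Main}, replacing the Petersson trace formula for $\hk$ by its harmonically weighted version for $\mathcal H_k^{*}(N)$. Set $Q=kN$, $z=c\log Q/\log\log Q$ for a small constant $c>0$, and let $\mathcal P$ be the set of primes $p\le z$ with $p\nmid N$. Our target event is that for every $p\in\mathcal P$ the angle $\theta_f(p)$ lies in $[0,\delta]$ for a small fixed $\delta$, say with $2\cos\delta>\sqrt 2$. On this event, for every $n\le z$ coprime to $N$ we have $\lambda_f(n)>0$. Indeed, $\lambda_f$ is multiplicative and, for $p\nmid N$, $\lambda_f(p^j)=U_j(\cos\theta_f(p))=\sin((j+1)\theta)/\sin\theta$ is positive as long as $(j+1)\theta<\pi$. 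Since $p^j\le z$ forces $j\le \log z/\log 2$, it is enough to take $\theta_f(p)\le \pi/(2\log z)$ uniformly. Shrinking the box this way costs an extra factor of roughly $(\log z)^{-\pi(z)}$ in probability, which is $\exp(-O(\pi(z)\log_3 Q))$. This is exactly the size of the loss allowed in the statement, because $\pi(z)\log\log z\approx \log Q\log_3 Q/(\log_2 Q)^2$. So the goal is to show that the harmonic measure of the box $\prod_{p\in\mathcal P}[0,\eta]$ with $\eta=\pi/(2\log z)$ is at least $\exp(-3\log Q\log_3 Q/(\log_2 Q)^2)$, and then pass from harmonic weights to counting.

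To estimate this measure, follow the Gonek--Montgomery/Tur\'an approach used for Theorem \ref{Thm:Main}. Build a nonnegative trigonometric polynomial $F(\theta)=\prod_{p\in\mathcal P}K(\theta_p)$. Each factor $K$ is a nonnegative Fej\'er-type polynomial in $\cos(j\theta_p)$, or equivalently a polynomial in $U_j(\cos\theta_p)$, of degree $L$. It should be concentrated near $0$ and satisfy $K(\theta)\le \epsilon_0 \max K$ off $[0,\eta]$. Expanding $F$ in Hecke eigenvalues gives $F(\theta_f)=\sum_{n} a_n\lambda_f(n)$, where $n$ runs over $\prod p^{j_p}$ with $j_p\le L$, so $n\le \prod_{p\le z}p^{L}=e^{L\theta(z)(1+o(1))}$. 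Choose $L\asymp\log z$ so that this stays below $Q^{1/2}$, or whatever range the weighted Petersson formula for squarefree level allows. In that range,
\[
\sum_{f\in\mathcal H_k^*(N)}\omega_f\lambda_f(n)=\delta_{n,1}+O(\text{small}),
\]
as in Iwaniec--Luo--Sarnak and Kowalski--Lau--Soundararajan--Wu, uniformly for $n$ coprime to $N$. The harmonic average of $F$ is then essentially $\int F\,d\mu_{ST}^{\otimes}$, where $\mu_{ST}$ is the Sato--Tate measure. Its part outside the box is at most a $1-$small fraction of the whole, so the box measure is at least about $\prod_p \mu_{ST}([0,\eta])\gg \exp(-C\pi(z)\log\log z)$. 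The error terms are controlled using $\sum|a_n|\le \prod_p \|K\|_{1,\text{coef}}$, which is $e^{O(\pi(z)\log L)}$ and therefore negligible.

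Finally, remove the weights. We have $\omega_f=\frac{12\zeta(2)}{(k-1)\varphi(N)}\cdots/L(1,\mathrm{sym}^2 f)$ up to bounded local factors at $p\mid N$. Combined with $L(1,\mathrm{sym}^2f)\gg 1/\log Q$ (Hoffstein--Lockhart/Goldfeld--Hoffstein--Lieman, since there are no Siegel zeros for symmetric squares), this gives $\omega_f\ll \log Q/|\mathcal H_k^*(N)|$. Hence the number of forms in the box is at least $|\mathcal H_k^{*}(N)|(\log Q)^{-1}$ times the harmonic measure, and the $\log Q$ factor is absorbed. I expect the main obstacle to be the second step: designing $K$ with degree $L\asymp\log z$ that concentrates on the tiny window $[0,\pi/(2\log z)]$ while keeping the product's total degree $\sum_p L\log p$ within the Petersson range. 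The numerology is tight, and it is what fixes $c$ and the constant $3$. I would first try Tur\'an's power-sum style kernels as in Gonek--Montgomery.
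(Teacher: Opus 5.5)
Your overall architecture (a product of nonnegative localized kernels expanded in the $X_\ell$, Petersson to replace the harmonic average by the Sato--Tate product measure, then unweighting via $\omega_f\ll(\log Q)^2/Q$) is the paper's. However, the uniform parameters you propose cannot work, and the step you flag as the main obstacle is in fact impossible.

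Only polynomials in the $\lambda_f(n)$ can be averaged, so the off-box part of $F$ must be bounded by a polynomial majorant, as in the paper's $G(f)=\prod_p g_p-\varepsilon\sum_q\prod_{p\ne q}g_p$. There are $J=\pi(z)\asymp\log Q/(\log_2Q)^2$ factors, so this requires $\sup_{\theta\in[\eta,\pi]}K(\theta)\le\frac1{2J}\int K\,d\mu_{ST}$. Since $\mu_{ST}([0,\eta])\asymp\eta^3$, this means $\sup_{[\eta,\pi]}K\ll\eta^3J^{-1}\max K$.

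But write $K(\theta)=P(\cos\theta)$ with $\deg P=L$ and $0\le P\le B$ on $[-1,\cos\eta]$. Chebyshev's extremal property gives $P\le B\,T_L\big(1+2\tan^2(\eta/2)\big)\le Be^{O(L\eta)}$ on $[\cos\eta,1]$. A Remez-type argument gives the same conclusion for the proportion of Sato--Tate mass lying outside $[0,\eta]$. So you need $L\eta\gg\log(J/\eta^3)\gg\log_2Q$.

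With $\eta=\pi/(2\log z)$ and $L\asymp\log z$ you have $L\eta\asymp1$. Every factor then leaks a fixed proportion of its mass, and the product over $J\to\infty$ primes is lost. If you instead take $L\asymp(\log_2Q)^2$, the integers in your expansion reach $\exp(L\vartheta(z))=Q^{\asymp c\log_2Q}$. That is far beyond the range of Lemma \ref{lem:Petersson}, or of any Petersson-type formula. With a uniform window and degree, the best you can reach is $z\asymp\log Q/(\log_2Q)^2$, a factor $\log_2Q$ short of the theorem.

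The missing idea is to let both the window and the degree depend on $p$. Positivity of $\lambda_f(p^m)$ for $p^m\le z$ only involves $m\le\log z/\log p$. So it suffices to have $\theta_f(p)\le2\pi\delta_p$ with $\delta_p=\log p/(16\log_2Q)$, which is a window of constant size when $p$ is close to $z$.

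Taking $L_p=\lfloor4(\log_2Q)^2/\log p\rfloor+1$ gives $L_p\delta_p\ge(\log_2Q)/4$. Hence, by Lemma \ref{Lem:approx}, $g_p\le4e^{-2\pi L_p\delta_p}\le4(\log Q)^{-\pi/2}=\varepsilon$ off the window. Moreover $J\varepsilon$ is negligible compared with $\min_p a_{0,p}\gg L_p^{-3}\gg(\log_2Q)^{-6}$.

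At the same time $L_p\log p\asymp(\log_2Q)^2$ for every $p$. So for $z=\log Q/(4\log_2Q)$ we get $\sum_{p\le z}L_p\log p=(1+o(1))\,4(\log_2Q)^2\pi(z)=(1+o(1))\log Q$. The Petersson error $Q^{-5/6}\prod_p(L_p+1)p^{L_p/3}=Q^{-1/2+o(1)}$ is then negligible.

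The gain over your scheme is exactly the factor $\log z$ between $\vartheta(z)$ and $\pi(z)$: each prime uses the same share $\asymp(\log_2Q)^2$ of the $\log n$ budget, rather than $\log p$ times a uniform degree. The main term $\prod_p a_{0,p}\ge\exp\big(-(3/2+o(1))\log Q\log_3Q/(\log_2Q)^2\big)$ then yields the stated count after unweighting.
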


\subsection*{Acknowledgements} The author is partially supported by a junior chair of the Institut Universitaire de France. This paper was written while the author was visiting the Max Planck Institute for Mathematics in Bonn, which he thanks for its warm hospitality and excellent working conditions. The author also thanks Sarvagya Jain for pointing out a refinement of the argument that led to the slight improvement in Theorem \ref{Thm:Main}, and Alia Hamieh for comments on this work.


\section{Preliminary results}

\subsection{A sharply localized trigonometric polynomial} 
In what follows we make use of a trigonometric
polynomial with a very large peak, whose construction goes back to
Chebyshev and was later exploited by Gonek and Montgomery  \cite{GoMo1, GoMo2}. 

\begin{lem}\label{Lem:approx}

Let $L$ be a positive integer, and suppose that $0<\delta\le \frac12$.
There exist a trigonometric polynomial $h:\mathbb{R}\to \mathbb{C}$ of the form
$$
h(\theta)=\sum_{\ell=0}^{L} c_\ell e(-\ell\theta),
$$
such that

\begin{itemize}
    \item[1.]  $\max_{\theta\in \mathbb{R}} |h(\theta)|=h(0)=1$ and
$
|h(\theta)|\le 2e^{-\pi L\delta}$
for $
\delta\le \theta\le 1-\delta$. Moreover,
for all $\theta\in \mathbb{R}$ we have $|h(\theta)|= |h(-\theta)|$. 
\item[2.] We have
\begin{equation}\label{Eq:L2} \int_0^1 |h(\theta)|^2 d\theta \geq \frac{1}{L+1},
\end{equation}
and 
\begin{equation}\label{Eq:L2SatoTate}\int_0^{1/2} |h(\theta)|^2 \sin(2\pi \theta)^2 d\theta \gg \frac{1}{L^3}.
\end{equation}
    
\end{itemize}

\end{lem}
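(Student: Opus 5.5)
The plan is to build $h$ from the Chebyshev polynomial $T_L$, following Gonek and Montgomery. Recall that $T_L(\cos x)=\cos(Lx)$. Hence $|T_L(y)|\le 1$ for $|y|\le 1$, and $|T_L(y)|$ is increasing in $|y|$ for $|y|\ge 1$, with $T_L(y)=\cosh(L\,\mathrm{arccosh}\,y)$ for $y\ge1$.

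\textbf{Construction.} Put $a=1/\cos(\pi\delta)\ge 1$ (recall $\delta\le \tfrac12$; the degenerate case $\delta=\tfrac12$ is handled by a limiting argument or by taking $\delta$ slightly smaller). Define
$$g(\theta)=\frac{T_L(a\cos \pi\theta)}{T_L(a)},\qquad h(\theta)=e(-L\theta/2)\,g(\theta).$$
\begin{itemize}
\item \emph{Form of $h$.} $T_L$ has the parity of $L$, so $T_L(a\cos\pi\theta)$ is a combination of $\cos^j(\pi\theta)$ with $j\equiv L \pmod 2$. Expanding, $\cos^j(\pi\theta)$ is a combination of $e(m\theta/2)$ with $|m|\le j$ and $m\equiv j\equiv L\pmod 2$. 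Multiplying by $e(-L\theta/2)$ gives exponents $-(L-m)/2\in\{0,-1,\dots,-L\}$. So $h(\theta)=\sum_{\ell=0}^L c_\ell e(-\ell\theta)$.
\item \emph{Symmetry.} $|h(\theta)|=|g(\theta)|$ and $g$ is even, so $|h(\theta)|=|h(-\theta)|$.
\item \emph{Maximum.} $h(0)=g(0)=1$. Since $|a\cos\pi\theta|\le a$ and $|T_L|$ is monotone in $|y|$ beyond $1$ (and at most $1$ inside $[-1,1]$), we get $\max|h|=1$.
\end{itemize}

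\textbf{Decay.} For $\delta\le\theta\le1-\delta$ we have $|\cos\pi\theta|\le\cos\pi\delta$, so $|a\cos\pi\theta|\le 1$. Therefore
$$|h(\theta)|\le \frac{1}{T_L(a)}\le 2e^{-L\,\mathrm{arccosh}(a)}.$$
It remains to check $\mathrm{arccosh}(\sec \pi\delta)\ge \pi\delta$, i.e.\ $\cosh x\cos x\le 1$ on $[0,\pi/2]$. This holds because $\frac{d}{dx}(\cosh x\cos x)=\sinh x\cos x-\cosh x\sin x\le 0$, since $\tanh x\le\tan x$ there. This is the one step needing a small calculus check; it is the main technical point and gives exactly the constant $\pi$.

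\textbf{$L^2$ bounds.} By Parseval and Cauchy--Schwarz,
$$\int_0^1|h|^2=\sum_\ell|c_\ell|^2\ge\frac{1}{L+1}\Big(\sum_\ell |c_\ell|\Big)^2\ge \frac{|h(0)|^2}{L+1}=\frac1{L+1}.$$
For the Sato--Tate weighted bound, apply Bernstein's inequality to $h$, a trigonometric polynomial of degree $L$ in $e(\theta)$, to get $|h'(\theta)|\le 2\pi L\max|h|=2\pi L$. Hence $|h(\theta)|\ge \tfrac12$ for $0\le\theta\le 1/(4\pi L)$, and this interval lies in $[0,1/2]$. On that interval $\sin(2\pi\theta)\ge 4\theta$, so
$$\int_0^{1/2}|h|^2\sin^2(2\pi\theta)\,d\theta\ \ge\ \tfrac14\int_0^{1/(4\pi L)}16\theta^2\,d\theta\ \gg\ L^{-3}.$$
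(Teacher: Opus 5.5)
Your proof is correct. For Part 1 and \eqref{Eq:L2} it follows the paper: you use the same Chebyshev polynomial $h$ and the same Parseval--Cauchy--Schwarz step. The only difference there is that the paper cites Lemma 7 of Gonek--Montgomery, whereas you verify its properties yourself. These checks are all sound: the frequency range via parity, the monotonicity of $|T_L|$ off $[-1,1]$, and $\cosh x\cos x\le 1$, which gives $\mathrm{arccosh}(\sec \pi\delta)\ge\pi\delta$.

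The genuine difference is in \eqref{Eq:L2SatoTate}. The paper argues globally. Using \eqref{Eq:L2}, $|h|\le 1$ and $|h(\theta)|=|h(-\theta)|$, it shows that at most half of $\int_0^{1/2}|h|^2$ lies in $[0,\varepsilon]\cup[\tfrac12-\varepsilon,\tfrac12]$ with $\varepsilon=1/(16L)$. Hence a mass $\ge 1/(4(L+1))$ lies where $\sin^2(2\pi\theta)\gg L^{-2}$. You instead argue locally at the peak. Bernstein's inequality $|h'|\le 2\pi L$ forces $|h|\ge\tfrac12$ on $[0,1/(4\pi L)]$, where $\sin^2(2\pi\theta)\ge 16\theta^2$, and this gives the explicit bound $(48\pi^3L^3)^{-1}$.

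Your route imports Bernstein's inequality, which is valid for complex coefficients and so applies here. In exchange it needs neither \eqref{Eq:L2} nor the symmetry of $|h|$. The paper's route is entirely elementary and simply recycles \eqref{Eq:L2}. Both give the same order $L^{-3}$ and work for any polynomial of degree $L$ with $\max|h|=h(0)=1$ (with $|h|$ even, in the paper's case).

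Your remark about $\delta=\tfrac12$ is fine: the limit $e(-L\theta/2)\cos^L(\pi\theta)$ satisfies every requirement. The paper's formula has the same degeneracy, which never arises in its application.
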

\begin{proof}
    Part 1 corresponds to Lemma 7 of Gonek--Montgomery \cite{GoMo2} where they chose
    $$ h(\theta)= e^{-i\pi L\theta} \frac{T_L\big(\cos(\pi\theta)/\cos(\pi\delta)\big)}{T_L\big(1/\cos(\pi\delta)\big)},$$
    where $T_L$ is the $L$-th Chebyshev polynomial of the first kind.  We now establish part 2. First, by Parseval's Theorem and the Cauchy--Schwarz inequality we have
    $$ \int_0^1|h(\theta)|^2d\theta=\sum_{\ell=0}^{L}|c_\ell|^2\geq \frac{1}{L+1}\bigg|\sum_{\ell=0}^{L}c_\ell\bigg|^2= \frac{h(0)^2}{L+1},$$
    which establishes \eqref{Eq:L2} since $h(0)=1$. 
    
    Now we prove \eqref{Eq:L2SatoTate}. 
Let $\varepsilon>0$ be a small parameter to be chosen, and put
$$
G_\varepsilon=[\varepsilon, 1/2-\varepsilon]
\quad\text{and}\quad
E_\varepsilon=[0,1/2]\setminus G_\varepsilon.
$$
Then for all $\theta\in G_\varepsilon$, we have
$
\sin(2\pi \theta)^2\gg \varepsilon^2.
$
Moreover, by \eqref{Eq:L2} we get
\begin{align*}
\int_{E_\varepsilon}|h(\theta)|^2\,d\theta
\le \int_{E_\varepsilon} 1\,d\theta= 2\varepsilon
\leq 2\varepsilon(L+1)\int_0^1|h(\theta)|^2\,d\theta.
\end{align*}
We now choose $\varepsilon=\frac{1}{16L}$ which implies that
$$
\int_{E_\varepsilon}|h(\theta)|^2\,d\theta
\le \frac14\int_0^1|h(\theta)|^2\,d\theta
=\frac12\int_0^{1/2}|h(\theta)|^2\,d\theta,
$$
since $|h(-\theta)|=|h(\theta)|$ and $h$ is $1$-periodic.
Hence, by \eqref{Eq:L2} we deduce that
$$
\int_{G_\varepsilon}|h(\theta)|^2\,d\theta
\ge \frac12\int_0^{1/2}|h(\theta)|^2\,d\theta
\ge \frac{1}{4(L+1)}.
$$
Therefore, we obtain
$$
\int_0^{1/2}|h(\theta)|^2\sin(2\pi \theta)^2\,d\theta
\ge \int_{G_\varepsilon}|h(\theta)|^2\sin(2\pi \theta)^2\,d\theta
\gg \varepsilon^2\int_{G_\varepsilon}|h(\theta)|^2\,d\theta
\gg \frac{1}{L^3},
$$
as desired.
\end{proof}

\subsection{Basic facts on Hecke eigenvalues}
Since $\hk=\mathcal{H}_k^*(N)$ when $N=1$, we recall some well-known facts concerning the Hecke eigenvalues $\lambda_f(n)$ in the general case $f\in\mathcal{H}_k^*(N)$. The Hecke relations now read
\begin{equation}\label{Eq:Hecke+}
\lambda_f(m)\lambda_f(n)=\sum_{\substack{d|(m,n)\\ (d, N)=1}}\lambda_f\left(\frac{mn}{d^2}\right),
\end{equation}
for all $m, n\geq 1$.  
By \eqref{Eq:Angles} and  \eqref{Eq:Hecke+} it follows that for all positive integers $m$ and all prime numbers  $p\nmid N$ we have 
\begin{equation}\label{Eq:HeckePowers}
\lambda_f(p^m)= \frac{\sin((m+1)\theta_f(p))}{\sin(\theta_f(p))}.
\end{equation}
We will require the following lemma, which is a  consequence of the Petersson trace formula. To this end, we define the harmonic weight 
$$ 
\omega_f:= \frac{\Gamma(k-1)}{(4\pi)^{k-1}\langle f, f \rangle}\frac{N}{\varphi(N)}, 
$$
where $\langle f, f \rangle$ is the Petersson norm of $f$. Using well-known bounds for $\langle f, f \rangle$ we have 
\begin{equation}\label{Eq:BoundHarmonic}
\omega_f\ll \frac{(\log (kN))^2}{kN}.
\end{equation}
\begin{lem}\label{lem:Petersson}
For all positive integers $m$ we have
$$
\sum_{f\in\mathcal{H}_k^*(N)} \omega_f \lambda_f(m)
=\mathbf{1}_{m=1}+O\left(m^{1/3}(kN)^{-5/6}\right).
$$
\end{lem}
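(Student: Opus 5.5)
This is the Petersson trace formula followed by a bound on the Kloosterman term. I would first recall the formula for the space $S_k(N)$ with an orthogonal basis $\mathcal{B}$:
$$
\frac{\Gamma(k-1)}{(4\pi)^{k-1}}\sum_{g\in\mathcal B}\frac{\lambda_g(m)\overline{\lambda_g(1)}}{\langle g,g\rangle}
=\mathbf 1_{m=1}+2\pi i^{-k}\sum_{\substack{c\ge1\\ N\mid c}}\frac{S(m,1;c)}{c}J_{k-1}\Big(\frac{4\pi\sqrt m}{c}\Big).
$$

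\textbf{The case $N=1$.} Here $\mathcal H_k$ is itself a basis of $S_k(1)$, so the left side is exactly $\sum_f\omega_f\lambda_f(m)$, and it remains to bound the Kloosterman sum. I would use Weil's bound $|S(m,1;c)|\le d(c)c^{1/2}$. For the Bessel function I would use two bounds:
\begin{itemize}
\item $J_{k-1}(x)\ll\min(1,(x/k)^{k-1}/\Gamma(k))$, i.e.\ $J_{k-1}(x)\ll (x/2)^{k-1}/\Gamma(k)$, for $x$ small compared with $k$;
\item the uniform bound $J_{k-1}(x)\ll k^{-1/3}$ for all $x$.
\end{itemize}
When $4\pi\sqrt m\le k/10$, the power bound makes the $c$-sum exponentially small in $k$, which is far smaller than the stated error. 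When $m\gg k^2$, I would bound each term by $k^{-1/3}d(c)c^{-1/2}$. I would split the $c$-sum at $c\asymp \sqrt m/k$:
\begin{itemize}
\item for $c\le\sqrt m/k$, use $k^{-1/3}$;
\item for larger $c$, use $J_{k-1}(x)\ll x/k$ together with the power decay.
\end{itemize}
This gives a contribution of roughly $k^{-1/3}(\sqrt m/k)^{1/2+\epsilon}\ll m^{1/3}k^{-5/6}$, since $m^{1/4}k^{-5/6}\le m^{1/3}k^{-5/6}$. The exponents $1/3$ and $5/6$ come from balancing the $k^{-1/3}$ Bessel bound against the length $\sqrt m/k$ of the $c$-sum. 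This matches Iwaniec--Luo--Sarnak, Corollary 2.10, which I would essentially quote.

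\textbf{General squarefree $N$.} Now $\mathcal H_k^*(N)$ is not a basis, because oldforms appear. I would use the explicit orthogonal basis of Iwaniec--Luo--Sarnak, built from $f(dz)$ for newforms $f$ of level $L\mid N$ and $d\mid N/L$. Their Proposition 2.8 then gives a sieved version: the newform sum equals a Möbius-type combination over $LM=N$ of full trace formulas of level $L$, with the normalizing factor $N/\varphi(N)$ accounting for the weights. Combining this with the Kloosterman bound at each level $L$ yields $\mathbf 1_{m=1}+O(m^{1/3}(kN)^{-5/6})$, possibly with divisor factors that are absorbed for squarefree $N$.

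\textbf{Main obstacle.} The newform sieving is the hardest step. One must check that the oldform corrections, which involve $\lambda_f(m\ell^2)$ sums for $\ell\mid N$, do not spoil the main term, and that $m$ coprime or not to $N$ is handled. In the paper only $m$ coprime to $N$ is needed, so in practice I would restrict to $(m,N)=1$ and cite Iwaniec--Luo--Sarnak directly.
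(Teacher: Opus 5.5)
Your proposal is correct and follows the same route as the paper, which simply quotes the Petersson formula in the form of Iwaniec--Luo--Sarnak's Corollary 2.10. Your $N=1$ sketch via Weil's bound and the Bessel estimates is how that bound arises; note that the tail $c\gg\sqrt m/k$ has to be controlled by the power bound $(x/2)^{k-1}/\Gamma(k)$, since $J_{k-1}(x)\ll x/k$ alone would only give $m^{1/4}k^{-1/2}$.

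Your restriction to $(m,N)=1$ is in fact necessary, not merely convenient, and the lemma should be read that way, which is all the paper uses. If $p\mid N$, every $f\in\mathcal H_k^*(N)$ has $\lambda_f(p^2)=\lambda_f(p)^2=1/p$. So for $m=p^2$ the sum is asymptotic to $1/p$, which is not $O(p^{2/3}(kN)^{-5/6})$ once $k$ is large compared with $N$ (e.g.\ $N=2$, $m=4$, $k\to\infty$).
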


\begin{proof}
This follows directly from \cite[Corollary 2.10]{ILS00}. See also the last
displayed equation on page 402 of \cite{KLSW}.
\end{proof}

\section{Proof of Theorems \ref{Thm:Main} and \ref{Thm:Main2}}
For $\theta\in\mathbb{R}$, we define
\begin{equation}\label{Eq:DefunctionG}
g(\theta)=\bigg|h\Big(\frac{\theta}{2\pi}\Big)\bigg|^2,
\end{equation}
where $h$ is the trigonometric polynomial in Lemma \ref{Lem:approx}.
Then we have $0\le g(\theta)\le 1$ for all $\theta\in \mathbb{R}$,
$\max_{\theta\in \mathbb{R}} g(\theta)=1$, $g$ is even and
$$
g(\theta)=\sum_{|\ell|\le L}b_\ell e^{i\ell\theta},
$$
where
$$
b_\ell
=\sum_{\substack{0\le \ell_1,\ell_2\le L\\ \ell_2-\ell_1=\ell}}
c_{\ell_1}\overline{c_{\ell_2}}.
$$

Recall that the Chebyshev functions $\{X_n\}_{n\ge 0}$, defined by
$$
X_n(\theta)=\frac{\sin((n+1)\theta)}{\sin\theta}
$$
for $\theta\in[0,\pi]$, form an orthonormal basis of
$L^2([0,\pi],\mu_{ST})$, where $\mu_{ST}=(2/\pi) (\sin \theta)^2d\theta$ is the Sato--Tate measure on $[0, \pi]$.  We can decompose $g$ in the basis $\{X_\ell\}_{\ell\ge 0}$ as follows
$$
g(\theta)=\sum_{\ell=0}^{\infty} a_\ell X_\ell(\theta),
$$
where
$$
a_\ell=\int_0^\pi g(\theta)X_\ell(\theta)\,d\mu_{ST}
=\frac{2}{\pi}\int_0^\pi g(\theta)X_\ell(\theta)(\sin\theta)^2\,d\theta.
$$
Since $g$ is an even trigonometric polynomial of degree $L$, it follows
from the definition of $X_\ell$ that $a_\ell=0$ for $\ell>L$. Hence we can write 
\begin{equation}\label{Eq:ChebyshevExpansion}
g(\theta)=\sum_{\ell=0}^{L} a_\ell X_\ell(\theta).   
\end{equation}
Moreover, by the Cauchy--Schwarz inequality, we have:
\begin{align}
|a_\ell|
&\le
\left(\int_0^\pi |g(\theta)|^2\,d\mu_{ST}\right)^{1/2}
\left(\int_0^\pi X_\ell(\theta)^2\,d\mu_{ST}\right)^{1/2} \nonumber\\
&\le
\left(\int_0^\pi 1\,d\mu_{ST}\right)^{1/2}
\left(\int_0^\pi X_\ell(\theta)^2\,d\mu_{ST}\right)^{1/2}
=1,
\end{align}
since $|g(\theta)|\le 1$ and $\{X_\ell\}_\ell$ is an orthonormal basis of
$L^2([0,\pi],\mu_{ST})$.

We now have all the necessary ingredients to prove Theorems \ref{Thm:Main} and \ref{Thm:Main2}.
\begin{proof}[Proof of Theorem \ref{Thm:Main}]
Let $2\leq z\leq \log k$ be a real number to be chosen. For each prime $p\leq z$ we let $\delta_p= (\log p)/(16 \log \log k)$ and $L_p=\lfloor 4 
(\log\log  k)^2/\log p \rfloor+1$. Let $h_p$ denote the trigonometric polynomial in Lemma \ref{Lem:approx}  with parameters $\delta=\delta_p$ and $L=L_p$, and let $g_p(\theta)=|h_p(\theta/(2\pi))|^2$. By \eqref{Eq:ChebyshevExpansion} we have 
\begin{equation}\label{Eq:ChebyshevExpansion2}
 g_p(\theta)= \sum_{\ell=0}^{L_p} a_{\ell, p} X_\ell(\theta), 
 \end{equation}
where $|a_{\ell, p}|\leq 1$. We also put $\varepsilon=4(\log k)^{-\pi/2}$.   
Then it follows from Lemma \ref{Lem:approx} that if
$\theta\in 
[2\pi \delta_p,\pi]
$
we have $0\leq g_p(\theta)\le 4e^{-2\pi L_p\delta_p}\leq \varepsilon$.
For $f\in\hk$ we define
\begin{equation}\label{Eq:DefG}
G(f):=\prod_{p\le z} g_p(\theta_f(p))
-\varepsilon\sum_{\substack{q\le z\\ q\text{ prime}}}
\prod_{\substack{p\le z\\ p\ne q}} g_p(\theta_f(p)).
\end{equation}
Then, note that if
$
\theta_f(q)\in [2\pi \delta_q,\pi]
$
for some prime $q\le z$, then
$$
G(f)
\le
\left(g_q(\theta_f(q))-\varepsilon\right)\prod_{\substack{p\le z\\ p\ne q}} g_p(\theta_f(p))
\le 0.
$$
On the other hand, for all $f\in\hk$ we have $G(f)\le 1$ since $0\leq g_p(\theta)\leq 1$ for all primes $p$ and all $\theta\in \mathbb{R}$.

Let $\A$ be the set of $f\in \hk$ such that $\theta_f(p)\in[0,2\pi \delta_p)$ for all primes $p\le z$,
and denote by $\mathbf{1}_{\A}$ the indicator function of $\A$.
Then we have
\begin{equation}\label{Eq:Compare1G}
\sum_{f\in\mathcal{H}_k} \omega_f \mathbf{1}_{\A}(f)
\ge
\sum_{f\in\mathcal{H}_k} \omega_f G(f).
\end{equation}
Furthermore, writing $p_j$ as the $j$th prime number and putting $J=\pi(z)$, we obtain by \eqref{Eq:ChebyshevExpansion2}
\begin{align*}
G(f)
&=
\prod_{p\le z}
\left(\sum_{\ell=0}^{L_p} a_{\ell, p} X_\ell(\theta_f(p))\right)
-
\varepsilon
\sum_{q\le z}
\prod_{\substack{p\le z\\ p\ne q}}
\left(\sum_{\ell=0}^{L_p} a_{\ell, p} X_\ell(\theta_f(p))\right)\\
&=
\sum_{\substack{0\le \ell_j\le L_{p_j}\\ 1\le j\le J}}
\prod_{j=1}^{J}
a_{\ell_j, p_j}X_{\ell_j}(\theta_f(p_j))
-
\varepsilon
\sum_{m=1}^{J}
\sum_{\substack{0\le \ell_j\le L_{p_j}\\ 1\le j\le J\\ j\ne m}}
\prod_{\substack{j=1\\ j\ne m}}^{J}
a_{\ell_j, p_j}X_{\ell_j}(\theta_f(p_j)).
\end{align*}
We now use \eqref{Eq:HeckePowers} which gives
$$
X_{\ell_1}(\theta_f(p_1))\cdots X_{\ell_J}(\theta_f(p_J))
=\lambda_f\left(\prod_{j=1}^{J} p_j^{\ell_j}\right).
$$
Inserting this above, and using that $|a_{\ell, p}|\le 1$ for all primes $p$ and integers $\ell$ gives
\begin{align}\label{Eq:EstimateAverageG}
\sum_{f\in\hk} \omega_f G(f)
&= \sum_{\substack{0\le \ell_j\le L_{p_j}\\ 1\le j\le J}}
\prod_{j=1}^{J}
a_{\ell_j, p_j}\sum_{f\in\hk} \omega_f \lambda_f\left(\prod_{j=1}^{J} p_j^{\ell_j}\right)
\nonumber\\
& \quad \quad \quad \quad -
\varepsilon
\sum_{m=1}^{J}
\sum_{\substack{0\le \ell_j\le L_{p_j}\\ 1\le j\le J\\ j\ne m}}
\prod_{\substack{j=1\\ j\ne m}}^{J}
a_{\ell_j, p_j} \sum_{f\in\hk} \omega_f \lambda_f\bigg(\prod_{\substack{j=1\\ j\ne m}}^{J} p_j^{\ell_j}\bigg)
\nonumber\\
&= \prod_{p\leq z} a_{0, p}
+O\Bigg(\varepsilon \sum_{q\leq z} \prod_{\substack{p\leq z\\ p\neq q}}a_{0, p}+ k^{-5/6}\prod_{p\le z}\big((L_p+1)p^{L_p/3}\big)
\Bigg),
\end{align}
since $J\varepsilon\leq 1$.
We now choose
$$
z=\frac{\log  k}{4\log \log k},
$$
so that $J \sim  \log k/(4(\log\log k)^2)$. Moreover, by \eqref{Eq:L2SatoTate} and the definition of $a_{0, p}$ we have 
\begin{align*}
a_{0, p}&=\int_0^\pi g_p(\theta)\,d\mu_{ST}
=\frac{2}{\pi}\int_0^\pi
\left|h_p\left(\frac{\theta}{2\pi}\right)\right|^2(\sin\theta)^2\,d\theta\\
&=4\int_0^{1/2}|h_p(t)|^2\sin(2\pi t)^2\,dt \gg \frac{1}{L_p^3} \gg \frac{1}{(\log\log k)^6}.
\end{align*}
Using these estimates in \eqref{Eq:EstimateAverageG} and recalling that $\ep=4 (\log k)^{-\pi/2}$ we obtain
$$
\sum_{f\in\hk} \omega_f G(f)
= (1+o(1))\prod_{p\leq z} a_{0, p}
\gg 
\exp\left(-2\frac{ \log k\log_3 k}{(\log_2 k)^2}\right).
$$
Next we combine this with the inequality \eqref{Eq:Compare1G} and the estimates \eqref{Eq:CardinalityHK} and \eqref{Eq:BoundHarmonic} to derive 
\begin{equation}\label{Eq:LowerBoundSetA}
|\mathcal{A}| \gg |\hk| \exp\left(-3\frac{ \log k\log_3 k}{(\log_2 k)^2}\right).
\end{equation}

Now let $f\in \A$ and suppose that $p^m\leq z$ for some prime $p$ and positive integer $m$. Then $p\leq z$ and $m\leq (\log z)/\log p\leq (\log \log k)/\log p.$ Therefore, by \eqref{Eq:HeckePowers} we get
$$ \lambda_f(p^m) = \frac{\sin((m+1)\theta_f(p))}{\sin(\theta_f(p))}>0, 
$$
since
$0\leq \theta_f(p)\leq 2\pi \delta_p$ (by our assumption that $f\in \A$), which implies that $$0\leq (m+1) \theta_f(p)\leq 4\pi\delta_p \frac{\log\log k}{\log p}= \frac{\pi}{4}. $$ This concludes the proof.
\end{proof}

\begin{proof}[Proof of Theorem \ref{Thm:Main2}]
The proof of Theorem \ref{Thm:Main2} follows along the exact same lines as that of
Theorem \ref{Thm:Main}, with the parameters $z=\log(kN)/(4\log\log(kN))$ and
$\varepsilon=4(\log(kN))^{-\pi/2}$, and with the function $G$ defined in \eqref{Eq:DefG} replaced by
$$
\widetilde{G}(f)
:=
\prod_{\substack{p\le z\\ p\nmid N}} \widetilde g_p(\theta_f(p))
-
\varepsilon
\sum_{\substack{q\le z\\ q\ \mathrm{prime}\\ q\nmid N}}
\prod_{\substack{p\le z\\ p\ne q\\ p\nmid N}} \widetilde g_p(\theta_f(p)),
$$   
where $\widetilde g_p(\theta)=|\widetilde h_p(\theta/(2\pi))|^2$, and $\widetilde h_p$ is the trigonometric polynomial of Lemma \ref{Lem:approx}
with parameters
$\delta=\widetilde \delta_p=(\log p)/(16\log\log(kN))$ and
$L=\widetilde L_p=\lfloor 4(\log\log(kN))^2/\log p\rfloor+1$.

\end{proof}

\end{document}